\newtheorem{theorem}{Theorem}[section]
\newtheorem{lemma}[theorem]{Lemma}
\theoremstyle{definition}
\newtheorem{definition}[theorem]{Definition}
\theoremstyle{remark}
\numberwithin{equation}{section}
\begin{document}

\title[An inductive proof of the Swiss cheese ``Classicalisation'' theorem]{An inductive proof of the Feinstein-Heath Swiss cheese ``Classicalisation'' theorem}

\author{J. W. D. Mason}
\address{School of Mathematical Sciences, University of Nottingham, Nottingham, NG7 2RD, UK}
\email{pmxjwdm@nottingham.ac.uk}
\thanks{The author was supported by a PhD grant from the EPSRC (UK)}

\subjclass[2000]{Primary 46J10; Secondary 54H99.}

\keywords{Swiss cheeses, rational approximation, uniform algebras.}

\begin{abstract}
A theory of allocation maps has been developed by J. F. Feinstein and M. J. Heath in order to prove a theorem, using Zorn's lemma, concerning the compact plane sets known as Swiss cheese sets.  These sets are important since, as domains, they provide a good source of examples in the theory of uniform algebras and rational approximation.  In this paper we take a more direct approach when proving their theorem by using transfinite induction and cardinality.  An explicit reference to a theory of allocation maps is no longer required.  Instead we find that the repeated application of a single operation developed from the final step of the proof by Feinstein and Heath is enough.
\end{abstract}

\maketitle
\begin{center}
Accepted for publication by the\\ 
Proceedings of the American Mathematical Society.
\end{center}
\section{Introduction}
\label{sec:Intro}
A Swiss cheese set is a compact plane set produced by deleting from the complex plane the elements of a collection, which usually observes some useful constraints, containing open discs and one complement of a closed disc.  As domains, these sets provide a good source of examples in the theory of uniform algebras and rational approximation. The study of Swiss cheese sets is as much a study of the collections that define them as it is of the sets themselves. For example, a collection is called classical if the closures of its elements are pairwise disjoint and the sum of the radii of its discs is finite. In this paper we give a new proof of an existing theorem by J. F. Feinstein and M. J. Heath. The theorem states that any Swiss cheese set defined by a collection which satisfies a particular radius condition contains a Swiss cheese set as a subset defined by a classical collection that also observes the radius condition.  Feinstein and Heath begin their proof by developing a theory of allocation maps connected to such sets.  A partial order on a family of these allocation maps is then introduced and Zorn's lemma applied. In this paper we take a more direct approach by using transfinite induction, cardinality and disc assignment functions, where a disc assignment function is a kind of labelled collection that defines a Swiss cheese set. An explicit theory of allocation maps is no longer required although we are still using them implicitly.  In this regard, connections with the original proof of Feinstein and Heath are also discussed.
\section{Proof of the Swiss cheese ``Classicalisation'' theorem}
\label{sec:Proof}
In this section we state and then give a proof of the Feinstein-Heath Swiss cheese ``Classicalisation'' theorem, \cite{Feinstein-Heath}, using transfinite induction and set theory. Throughout, all discs in the complex plane are required to have finite positive radius and $\mathbb{N}_{0}:=\mathbb{N}\cup\{0\}$. For a disc $D$ in the plane we let $r(D)$ denote the radius of $D$. We begin with the following definitions, where Definition \ref{def:SC} (a), (b) and (c) have been taken from the paper of Feinstein and Heath, \cite{Feinstein-Heath}.
\begin{definition}
Let $\mathcal{O}$ be the set of all open discs and complements of closed discs in the complex plane.
\begin{enumerate}
\item[\textup{(a)}]
A {\bf{Swiss cheese}} is a pair ${\bf{D}}:=(\Delta,\mathcal{D})$ for which $\Delta$ is a closed disc and $\mathcal{D}$ is a countable or finite collection of open discs. A Swiss cheese ${\bf{D}}=(\Delta,\mathcal{D})$ is {\bf{classical}} if the closures of the discs in $\mathcal{D}$ intersect neither one another nor $\mathbb{C}\backslash\mbox{\rm{int}}\Delta$, and $\sum_{D\in\mathcal{D}}r(D)<\infty$.
\item[\textup{(b)}]
The {\bf{associated Swiss cheese set}} of a Swiss cheese ${\bf{D}}=(\Delta,\mathcal{D})$ is the plane set  $X_{\bf{D}}:=\Delta\backslash\bigcup\mathcal{D}$.\\
A {\bf{classical Swiss cheese set}} is a plane set $X$ for which there exists a classical Swiss cheese ${\bf{D}}=(\Delta,\mathcal{D})$ such that $X=X_{\bf{D}}$. 
\item[\textup{(c)}]
For a Swiss cheese ${\bf{D}}=(\Delta,\mathcal{D})$, we define $\delta({\bf{D}}):=r(\Delta)-\sum_{D\in\mathcal{D}}r(D)$ so that $\delta({\bf{D}})>-\infty$ if and only if $\sum_{D\in\mathcal{D}}r(D)<\infty$.
\item[\textup{(d)}]
A {\bf{disc assignment function}} $d:S\rightarrow\mathcal{O}$ is a map from a subset $S\subseteq\mathbb{N}_{0}$, with $0\in S$, into $\mathcal{O}$ such that ${\bf{D}}_{d}:=(\mathbb{C}\backslash d(0),d(S\backslash\{0\}))$ is a Swiss cheese. We allow $S\backslash\{0\}$ to be empty since a Swiss cheese ${\bf{D}}=(\Delta,\mathcal{D})$ can have $\mathcal{D}=\emptyset$.
\item[\textup{(e)}]
For a disc assignment function $d:S\rightarrow\mathcal{O}$ and $i\in S$ we let $\bar{d}(i)$ denote the closure of $d(i)$ in $\mathbb{C}$, that is $\bar{d}(i):=\overline{d(i)}$. A disc assignment function $d:S\rightarrow\mathcal{O}$ is said to be {\bf{classical}} if for all $(i,j)\in S^{2}$ with $i\not= j$ we have $\bar{d}(i)\cap\bar{d}(j)=\emptyset$ and $\sum_{n\in S\backslash\{0\}}r(d(n))<\infty$.
\item[\textup{(f)}]
For a disc assignment function $d:S\rightarrow\mathcal{O}$ we let $X_{d}$ denote the associated Swiss cheese set of the Swiss cheese ${\bf{D}}_{d}$.
\item[\textup{(g)}]
A disc assignment function $d:S\rightarrow\mathcal{O}$ is said to have the {\bf{Feinstein-Heath condition}} when $\sum_{n\in S\backslash\{0\}}r(d(n))<r(\mathbb{C}\backslash d(0))$.
\item[\textup{(h)}]
Define $H$ as the set of all disc assignment functions with the Feinstein-Heath condition.\\
For $h\in H$, $h:S\rightarrow\mathcal{O}$, define $\delta_{h}:=r(\mathbb{C}\backslash h(0))-\sum_{n\in S\backslash\{0\}}r(h(n))>0$.
\end{enumerate}
\label{def:SC}
\end{definition}
Here is the Feinstein-Heath Swiss cheese ``Classicalisation'' theorem as it appears in \cite{Feinstein-Heath}.
\begin{theorem}[The Feinstein-Heath Swiss cheese ``Classicalisation'' theorem]
For every Swiss cheese ${\bf{D}}$ with $\delta({\bf{D}})>0$, there is a classical Swiss cheese ${\bf{D^{'}}}$ with $X_{\bf{D^{'}}}\subseteq X_{\bf{D}}$ and $\delta({\bf{D^{'}}})\ge\delta({\bf{D}})$.
\label{the:FHT}
\end{theorem}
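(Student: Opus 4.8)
The plan is to recast everything in the disc assignment function language of Definition \ref{def:SC} and to isolate a single ``clash-resolving'' operation whose repeated, transfinite application drives an arbitrary element of $H$ towards a classical one while never enlarging the associated Swiss cheese set nor decreasing $\delta$. Given a Swiss cheese $\mathbf D$ with $\delta(\mathbf D)>0$, I first encode it as a disc assignment function $d\colon S\to\mathcal O$ with $d(0)=\mathbb C\setminus\Delta$ and $\{d(n):n\in S\setminus\{0\}\}=\mathcal D$; the hypothesis $\delta(\mathbf D)>0$ says exactly that $d\in H$, and one checks $X_d=\mathbb C\setminus\bigcup_{i\in S}d(i)$, so that $X_{d'}\subseteq X_d$ is equivalent to the deleted union only growing. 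The target is a classical $d'\in H$ with $X_{d'}\subseteq X_d$ and $\delta_{\mathbf D_{d'}}\ge\delta_{\mathbf D_d}$.

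The heart of the single operation is an elementary geometric lemma about elements of $\mathcal O$ whose closures meet. First, if $D_1,D_2$ are open discs with $\overline{D_1}\cap\overline{D_2}\ne\emptyset$, centres $c_1,c_2$ and radii $r_1,r_2$, then the smallest open disc $D$ containing $D_1\cup D_2$ has radius $\frac{1}{2}(|c_1-c_2|+r_1+r_2)\le r_1+r_2$, the bound coming from $|c_1-c_2|\le r_1+r_2$. Second, if an open disc $D$ clashes with the boundary, i.e.\ $\overline D\cap(\mathbb C\setminus\mathrm{int}\,\Delta)\ne\emptyset$, then sliding a shrunk copy of $\Delta$ directly away from the centre of $D$ yields a closed disc $\Delta'\subseteq\Delta$ with $\Delta'\cap D=\emptyset$ and $r(\Delta)-r(\Delta')\le r(D)$. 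Using these, the operation takes a disc assignment function possessing a clash $\overline{d(i)}\cap\overline{d(j)}\ne\emptyset$ with $i\ne j$ and returns one whose index set has exactly one fewer element: if $0\notin\{i,j\}$ it replaces $d(i),d(j)$ by the merged disc $D$, and if $0\in\{i,j\}$ it replaces $d(0)$ by $\mathbb C\setminus\Delta'$ and deletes the other index. In either case a short computation with the two radius inequalities shows the deleted union grows (so $X$ shrinks) while $\sum_{n}r(d(n))$ does not increase relative to $r(\mathbb C\setminus d(0))$, whence $\delta$ does not decrease; in particular the operation maps $H$ into $H$.

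Starting from $d_0:=d$, I would then define $d_{\alpha+1}$ by applying the operation to a canonically chosen clash of $d_\alpha$ (say the lexicographically least clashing pair of indices), stopping if $d_\alpha$ is already classical, and at a limit ordinal $\lambda$ setting $d_\lambda$ to be the natural limit of $(d_\alpha)_{\alpha<\lambda}$, with $S_\lambda:=\bigcap_{\alpha<\lambda}S_\alpha$. The inequalities $X_{d_\alpha}\subseteq X_d$ and $\delta_{\mathbf D_{d_\alpha}}\ge\delta_{\mathbf D_d}$ hold at successor steps by the lemma, and \emph{the hard part will be the limit stages}. There, for a surviving index the discs $d_\alpha(i)$ form a nested increasing family $E_1\subseteq E_2\subseteq\cdots$ whose radii increase but stay bounded by $\sum_n r(d(n))<r(\Delta)$; since nestedness forces $|a_{k+1}-a_k|\le r(E_{k+1})-r(E_k)$, the centres are Cauchy and the $E_k$ converge to a genuine open disc whose radius is at most the sum of the radii it has absorbed, with the analogous nested-decreasing analysis giving a limiting closed disc of radius $\ge\delta_{\mathbf D_d}>0$ at the index $0$. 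I must verify that this limiting object is again a disc assignment function in $H$, that it contains the whole absorbed union, and that finiteness of the total radius sum and the $\delta$-inequality pass to the limit. Termination is then the clean payoff of cardinality: the index sets $S_\alpha\subseteq\mathbb N_0$ are non-increasing and strictly shrink at every successor step, and a removed index never returns, so there can be at most $\aleph_0$ successor steps; a recursion of length $\ge\omega_1$ would demand $\omega_1$ distinct removed indices from a countable set, which is absurd. Hence the process halts at some countable ordinal, and it can only halt at a $d_\alpha$ with no remaining clash, that is, at a classical disc assignment function, which supplies the required $\mathbf D'$.
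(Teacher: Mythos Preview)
Your proposal is correct and follows essentially the same route as the paper: recast into disc assignment functions, define a single clash-resolving self-map on $H$ via the two geometric lemmas (Lemmas~\ref{lem:2.4.13} and~\ref{lem:2.4.14}), choose the lexicographically least clash, iterate transfinitely with $S_\lambda=\bigcap_{\alpha<\lambda}S_\alpha$ and $h^\lambda(n)=\bigcup_{\alpha<\lambda}h^\alpha(n)$ at limits, and terminate by the $\omega_1$ versus $\aleph_0$ cardinality mismatch. The place you flag as ``the hard part'' is exactly where the paper expends its effort (Lemma~\ref{lem:h ord} at limit ordinals), in particular verifying that the $\delta$-inequality and the containment $X_{h^\lambda}\subseteq X_{h^\beta}$ survive the limit; the latter needs one to track where a deleted index's disc was absorbed, which you have not spelled out but which is straightforward once noticed.
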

From Definition \ref{def:SC} we note that if a disc assignment function $d:S\rightarrow\mathcal{O}$ is classical then the Swiss cheese ${\bf{D}}_{d}$ will also be classical. Similarly if $d$ has the Feinstein-Heath condition then $\delta({\bf{D}}_{d})>0$. The converse of each of these implications will not hold in general because $d$ need not be injective.  However it is immediate that for every Swiss cheese ${\bf{D}}=(\Delta,\mathcal{D})$ with $\delta({\bf{D}})>0$ there exists an injective disc assignment function $h\in H$ such that ${\bf{D}}_{h}={\bf{D}}$. We note that every disc assignment function $h\in H$ has $\delta({\bf{D}}_{h})\ge\delta_{h}$ with equality if and only if $h$ is injective and that classical disc assignment functions are always injective. With these observations it easily follows that Theorem \ref{the:FHT} is equivalent to the following theorem involving disc assignment function.
\begin{theorem} 
For every disc assignment function $h\in H$ there is a classical disc assignment function $h^{'}\in H$ with $X_{h^{'}}\subseteq X_{h}$ and $\delta_{h^{'}}\ge\delta_{h}$.
\label{the:HT}
\end{theorem}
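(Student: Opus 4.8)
The plan is to isolate a single ``classicalisation step'' and then iterate it by transfinite recursion until no step is possible, the terminal object being the required classical disc assignment function. Call $h:S\rightarrow\mathcal{O}$ \emph{bad} at $(i,j)$, $i\neq j$, if $\bar{h}(i)\cap\bar{h}(j)\neq\emptyset$. If $h$ is bad at a pair with $i,j\geq 1$, I would \emph{merge}: replace the open discs $h(i),h(j)$ by a single open disc $D\supseteq h(i)\cup h(j)$ with $r(D)\leq r(h(i))+r(h(j))$, deleting one index. Such a $D$ exists because intersecting closures force the centres to lie at distance at most $r(h(i))+r(h(j))$, so the minimal enclosing disc has radius at most this sum; one checks the corresponding open disc already contains the two open discs, the tangential case causing no trouble since we need only engulf the open discs. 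If instead $h$ is bad at a pair involving $0$, say $\bar{h}(i)\cap\bar{h}(0)\neq\emptyset$ with $i\geq 1$, I would \emph{absorb} $h(i)$ into the complement: delete $i$ and replace $h(0)$ by $\mathbb{C}\backslash\Delta'$, where $\Delta'\subseteq\mathbb{C}\backslash h(0)$ is a closed disc avoiding $h(i)$ with $r(\Delta')\geq r(\mathbb{C}\backslash h(0))-r(h(i))$; sliding the centre of $\Delta'$ away from $h(i)$ by $r(h(i))$ achieves this. Writing $h^{*}$ for the result in either case, a short calculation yields the three invariants I need: $X_{h^{*}}\subseteq X_{h}$ (the removed open set only grows), $\delta_{h^{*}}\geq\delta_{h}$ (the radius added is at most the radius deleted), and hence $h^{*}\in H$. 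Crucially, since each merge enlarges a radius by at most the radius it consumes, iterating along a chain of mutually overlapping discs keeps every enclosing radius bounded by the relevant partial sum of original radii; this is exactly why a single \emph{pairwise} operation is enough and one never needs to engulf a whole cluster at once.

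Step two is the transfinite recursion. Set $h_{0}:=h$. Given $h_{\alpha}$, if it is classical I stop; otherwise I choose, in a fixed well-ordering of $\mathbb{N}_{0}^{2}$, the least bad pair and set $h_{\alpha+1}:=h_{\alpha}^{*}$. At a limit ordinal $\lambda$ I must manufacture $h_{\lambda}$ as the limit of the $h_{\alpha}$, $\alpha<\lambda$, and here the summability built into the Feinstein-Heath condition does the work: each repeatedly merged disc generates a nested family of enclosing discs whose radii increase but stay bounded and whose centres form a Cauchy net, so these converge to a limit open disc; I would take the family of such limit discs as the collection of $h_{\lambda}$, with $h_{\lambda}(0)$ the limit of the shrinking complements. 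One then verifies that this collection is again countable with summable radii, that $X_{h_{\lambda}}=\bigcap_{\alpha<\lambda}X_{h_{\alpha}}$, and that $\delta_{h_{\lambda}}\geq\sup_{\alpha<\lambda}\delta_{h_{\alpha}}\geq\delta_{h}$, so that $h_{\lambda}\in H$ and the invariants persist through the limit.

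The hard part will be precisely this limit step: making the limit disc assignment function well defined and confirming it is a genuine element of $H$ that retains $X_{h_{\lambda}}\subseteq X_{h}$ and $\delta_{h_{\lambda}}\geq\delta_{h}$. The real content is the bookkeeping of which discs are merged into which across infinitely many stages, and it is exactly here that the implicit allocation-map structure of Feinstein and Heath reappears; I expect the cleanest route is to track the induced map from the original index set to the surviving (limit) discs and to read off countability and summability from it.

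Finally, termination and extraction. The removed open sets $U_{\alpha}:=\mathbb{C}\backslash X_{h_{\alpha}}$ form a nondecreasing transfinite chain of open subsets of $\mathbb{C}$, and since $\mathbb{C}$ is second countable a strictly increasing chain of open sets contributes only countably many genuine steps; no configuration can recur, because while $U_{\alpha}$ is held constant the merges strictly consume discs. As there are at most $2^{\aleph_{0}}$ disc assignment functions available, a cardinality argument bounds the length of the recursion and forces it to halt at some ordinal $\alpha_{0}$. By construction $h_{\alpha_{0}}$ has no bad pair, hence is classical, and the accumulated invariants give $X_{h_{\alpha_{0}}}\subseteq X_{h}$ and $\delta_{h_{\alpha_{0}}}\geq\delta_{h}$. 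Taking $h^{'}:=h_{\alpha_{0}}$ then completes the proof.
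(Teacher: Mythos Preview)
Your proposal is correct and follows essentially the same route as the paper: a single merge/absorb step (the paper's Lemmas~\ref{lem:2.4.13} and~\ref{lem:2.4.14}) iterated by transfinite recursion, with the limit stage handled by taking unions of the nested discs indexed by the surviving labels, and termination by cardinality. One simplification worth noting: your second-countability and $2^{\aleph_{0}}$ remarks are detours, since you already observed that each non-classical step deletes an index from the countable domain $S_{\alpha}\subseteq\mathbb{N}_{0}$, and this alone forces the decreasing chain $(S_{\alpha})$ to stabilise before $\omega_{1}$---which is exactly the paper's termination argument.
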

Several lemmas from \cite{Feinstein-Heath} and \cite{Heath} will be used in the proof of Theorem \ref{the:HT} and we consider them now.
\begin{lemma}
Let $D_{1}$ and $D_{2}$ be open discs in $\mathbb{C}$ with radii $r(D_{1})$ and $r(D_{2})$ respectively such that $\bar{D}_{1}\cap\bar{D}_{2}\not=\emptyset$. Then there is an open disc $D$ with $D_{1}\cup D_{2}\subseteq D$ and with radius $r(D)\le r(D_{1})+r(D_{2})$.
\label{lem:2.4.13}
\end{lemma}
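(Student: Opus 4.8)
The plan is to reduce the statement to elementary plane geometry by passing to centres and radii. Write $z_1, z_2 \in \mathbb{C}$ for the centres of $D_1$ and $D_2$ and set $r_1 := r(D_1)$ and $r_2 := r(D_2)$. The hypothesis $\bar{D}_1 \cap \bar{D}_2 \neq \emptyset$ says precisely that the two closed discs overlap, which is equivalent to the distance bound $|z_1 - z_2| \le r_1 + r_2$. The conclusion I must produce is a centre $c$ and a radius $R \le r_1 + r_2$ for which the open disc $D := \{\, w : |w - c| < R \,\}$ contains $D_1 \cup D_2$.

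First I would record the containment criterion supplied by the triangle inequality: if $|c - z_i| + r_i \le R$, then every $w$ with $|w - z_i| < r_i$ satisfies $|w - c| \le |w - z_i| + |z_i - c| < R$, so $D_i \subseteq D$. Thus it suffices to find $c$ with $|c - z_1| + r_1 \le r_1 + r_2$ and $|c - z_2| + r_2 \le r_1 + r_2$; equivalently $|c - z_1| \le r_2$ and $|c - z_2| \le r_1$, after which one takes $R := r_1 + r_2$.

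Next I would exhibit such a point explicitly on the segment joining the two centres. If $z_1 = z_2$ the choice $c := z_1$ works trivially, so assume $z_1 \neq z_2$ and let $c$ divide the segment from $z_1$ to $z_2$ in the ratio $r_2 : r_1$, that is, $c := z_1 + \frac{r_2}{r_1 + r_2}(z_2 - z_1)$. Then $|c - z_1| = \frac{r_2}{r_1 + r_2}|z_1 - z_2|$ and $|c - z_2| = \frac{r_1}{r_1 + r_2}|z_1 - z_2|$, and the distance bound $|z_1 - z_2| \le r_1 + r_2$ immediately gives $|c - z_1| \le r_2$ and $|c - z_2| \le r_1$, as required. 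Taking $R := r_1 + r_2$ then yields the desired disc $D$ and completes the argument.

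I do not expect a genuine obstacle here: the lemma is a routine exercise in the triangle inequality once the intersection hypothesis is rewritten as the distance bound $|z_1 - z_2| \le r_1 + r_2$. The only points needing a little care are choosing the centre $c$ so that both containment inequalities hold simultaneously — the ratio split $r_2 : r_1$ is exactly what balances them — and checking that the strict inequalities defining the open discs survive the estimate, which they do because the contribution $|w - z_i| < r_i$ is already strict.
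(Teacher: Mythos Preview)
Your argument is correct. The translation of $\bar D_1\cap\bar D_2\neq\emptyset$ into the distance bound $|z_1-z_2|\le r_1+r_2$, the containment criterion $|c-z_i|+r_i\le R$, and the explicit choice of $c$ on the segment dividing it in the ratio $r_2:r_1$ all work exactly as you say; the resulting open disc of radius $R=r_1+r_2$ contains $D_1\cup D_2$ and meets the radius bound with equality.

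As for comparison: the paper does not actually supply a proof of this lemma. It is quoted from \cite{Feinstein-Heath} and \cite{Heath} as one of several auxiliary results, accompanied only by an illustrative figure. So there is no in-paper argument to weigh yours against. Your construction is the standard one and is presumably what underlies the cited references; note in passing that one could shave the radius down further (to $\max\{r_1,r_2,\tfrac12(|z_1-z_2|+r_1+r_2)\}$) by taking the smallest enclosing disc, but the cruder bound $r_1+r_2$ is all that the subsequent applications in the paper require.
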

Figure \ref{fig:disclemmas}, Example 1 exemplifies the application of Lemma \ref{lem:2.4.13}.
\begin{lemma}
Let $D$ be an open disc and $\Delta$ be a closed disc such that $\bar{D}\not\subseteq\mbox{\textup{int}}\Delta$ and $\Delta\not\subseteq\bar{D}$. Then there is a closed disc $\Delta^{'}\subseteq\Delta$ with $D\cap\Delta^{'}=\emptyset$ and $r(\Delta^{'})\ge r(\Delta)-r(D)$.
\label{lem:2.4.14}
\end{lemma}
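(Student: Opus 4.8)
The plan is to reduce everything to one–dimensional inequalities among the inter-centre distance and the two radii, and then to exhibit $\Delta'$ explicitly as a closed disc pushed to the far side of $\Delta$, away from $D$.

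First I would fix notation: write $a$ for the centre of $D$ and $b$ for the centre of $\Delta$, and set $\rho:=r(D)$, $R:=r(\Delta)$ and $t:=|a-b|$. The two hypotheses translate into statements about $t$. Since the point of $\bar{D}$ furthest from $b$ lies at distance $t+\rho$, the condition $\bar{D}\not\subseteq\mathrm{int}\,\Delta$ says precisely that $t+\rho\ge R$. Likewise, since the point of $\Delta$ furthest from $a$ lies at distance $t+R$, the condition $\Delta\not\subseteq\bar{D}$ says precisely that $t+R>\rho$. A first useful consequence is that $t>0$: if we had $a=b$ these two inequalities would read $\rho\ge R$ and $R>\rho$ simultaneously, which is impossible. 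Hence the unit vector $u:=(b-a)/t$ is well defined, pointing from $D$ towards the far side of $\Delta$.

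Next I would construct $\Delta'$. Set $r':=\min\{R,\,(t+R-\rho)/2\}$ and take $\Delta'$ to be the closed disc of radius $r'$ centred at $c:=b+(R-r')u$, that is, the disc internally tangent to $\partial\Delta$ at the boundary point of $\Delta$ furthest from $a$ (when $D$ is far from $\Delta$ this collapses to $\Delta'=\Delta$). Three verifications then remain, each reducing to one of the two inequalities above. Containment $\Delta'\subseteq\Delta$ holds because $|c-b|+r'=(R-r')+r'=R$. Disjointness $D\cap\Delta'=\emptyset$ holds because $|c-a|=t+R-r'\ge\rho+r'$, the inequality being equivalent to $r'\le(t+R-\rho)/2$, which is true by the choice of $r'$; here external tangency is harmless since $D$ is open. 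Finally the radius estimate $r'\ge R-\rho=r(\Delta)-r(D)$ holds because $R\ge R-\rho$ and, by the first hypothesis $t+\rho\ge R$, also $(t+R-\rho)/2\ge R-\rho$, while positivity $r'>0$ follows from the second hypothesis $t+R>\rho$.

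The computations are all elementary, so the main thing to get right is the bookkeeping: checking that the single formula for $r'$ simultaneously yields a disc that is nonempty, sits inside $\Delta$, and clears $D$, and that each of these three facts is powered by exactly the hypothesis that makes it true. The one genuinely subtle point, and the step I would be most careful about, is the reduction establishing $t>0$, since the construction relies on the direction $u$ and would be meaningless for concentric discs; it is precisely the combination of \emph{both} hypotheses that rules this out. A secondary care point is the open-versus-closed distinction in the disjointness claim, where tangency at a single boundary point of the open disc $D$ still leaves the intersection empty.
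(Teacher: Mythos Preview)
Your proof is correct. The translation of the two hypotheses into the inequalities $t+\rho\ge R$ and $t+R>\rho$ is accurate, the derivation of $t>0$ from the conjunction of these is sound, and the explicit choice $r'=\min\{R,(t+R-\rho)/2\}$ with centre $c=b+(R-r')u$ verifies all three required properties by the elementary computations you give.

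As for comparison with the paper: there is nothing to compare against. The paper does not prove this lemma; it merely quotes it, together with Lemmas~\ref{lem:2.4.13}, \ref{lem:2.4.11} and \ref{lem:2.4.12}, from the Feinstein--Heath paper and Heath's thesis, and then invokes it as a black box inside the definition of the map $f:H\to H$. Your argument therefore supplies a self-contained proof that the paper itself omits. The explicit construction you give---pushing $\Delta'$ to be internally tangent to $\partial\Delta$ at the point diametrically opposite $D$---is the natural one, and is presumably what underlies the cited references as well; Figure~\ref{fig:disclemmas}, Example~2 in the paper depicts exactly this geometry.
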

Figure \ref{fig:disclemmas}, Example 2 exemplifies the application of Lemma \ref{lem:2.4.14}.
\begin{figure}
\includegraphics{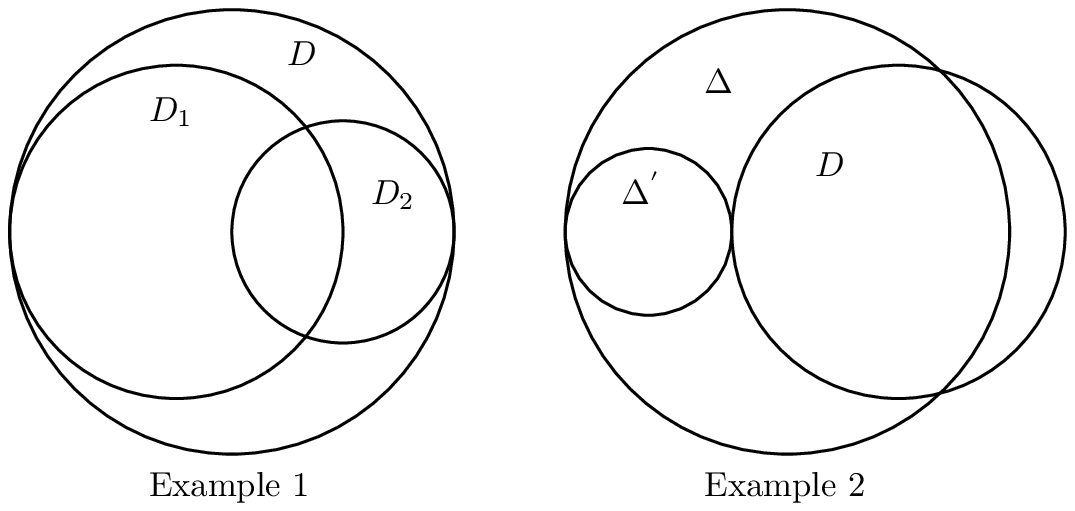}
\caption{Examples for lemmas \ref{lem:2.4.13} and \ref{lem:2.4.14}.}
\label{fig:disclemmas}
\end{figure}
\begin{lemma}
Let $\mathcal{F}$ be a non-empty, nested collection of open discs in $\mathbb{C}$, such that $\sup\{r(E):E\in\mathcal{F}\}<\infty$. Then $\bigcup\mathcal{F}$ is an open disc $D$. Further, for $\mathcal{F}$ ordered by inclusion, $r(D)=\lim_{E\in\mathcal{F}}r(E)=\sup_{E\in\mathcal{F}}r(E)$. 
\label{lem:2.4.11} 
\end{lemma}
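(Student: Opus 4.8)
The plan is to identify $\bigcup\mathcal{F}$ concretely as a single disc. Set $R:=\sup\{r(E):E\in\mathcal{F}\}$; this is finite by hypothesis and strictly positive since every disc has positive radius. Openness of $U:=\bigcup\mathcal{F}$ is automatic, being a union of open sets, so the real content is that $U$ is a disc of radius $R$. I would first reduce the (possibly uncountable) collection to a countable increasing chain: choose $E_n\in\mathcal{F}$ with $r(E_n)\to R$, and let $F_n$ be the largest of $E_1,\dots,E_n$ under inclusion, which exists because any finite subfamily of a nested collection is totally ordered. Then $F_1\subseteq F_2\subseteq\cdots$ and, by a squeeze, $r(F_n)\to R$. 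Writing each open disc as $B(c,\rho)$ with centre $c$ and radius $\rho$, I would record at the outset the elementary containment criterion that, for open discs, $B(c_1,\rho_1)\subseteq B(c_2,\rho_2)$ holds if and only if $|c_1-c_2|\le\rho_2-\rho_1$.

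With $F_n=B(c_n,\rho_n)$ the criterion gives $|c_n-c_m|\le\rho_m-\rho_n$ whenever $m\ge n$. Since $\rho_n\uparrow R$, the right-hand side is at most $R-\rho_n\to 0$, so $(c_n)$ is Cauchy and converges to some $c\in\mathbb{C}$; letting $m\to\infty$ in the same inequality also gives $|c_n-c|\le R-\rho_n$. I would then prove $\bigcup_n F_n=B(c,R)$ by two inclusions. For $z\in F_n$ one has $|z-c|<\rho_n+|c_n-c|\le R$, giving $\bigcup_n F_n\subseteq B(c,R)$. Conversely, for $|z-c|<R$ the bound $|z-c_n|\le |z-c|+(R-\rho_n)$ is less than $\rho_n$ as soon as $2\rho_n-R>|z-c|$, which holds for large $n$ because $\rho_n\to R$; hence $z\in F_n$ and the reverse inclusion follows.

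It then remains to check that passing to the chain lost nothing, that is $U=\bigcup_n F_n$. One inclusion is trivial. For the other, take $E\in\mathcal{F}$; by nestedness, for each $n$ either $E\subseteq F_n$ or $F_n\subseteq E$. If $E\subseteq F_n$ for some $n$ then $E\subseteq B(c,R)$; otherwise $F_n\subseteq E$ for all $n$, forcing $r(E)\ge\rho_n\to R$, so $r(E)=R$ and $B(c,R)=\bigcup_n F_n\subseteq E$ with equal radii, whence $E=B(c,R)$. Either way $E\subseteq B(c,R)$, so $U=B(c,R)$ is an open disc of radius $R$. Finally, the net $E\mapsto r(E)$ on $(\mathcal{F},\subseteq)$ is monotone and bounded above by $R$ with supremum $R$, so it converges to $R$, giving $r(D)=\lim_{E\in\mathcal{F}}r(E)=\sup_{E\in\mathcal{F}}r(E)$. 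I expect the main obstacle to be the convergence of the centres together with the reverse inclusion $B(c,R)\subseteq\bigcup_n F_n$: these are what pin down the exact centre and radius, and they must be handled without assuming that the supremum $R$ is attained.
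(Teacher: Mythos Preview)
The paper does not actually prove this lemma: it is quoted from \cite{Feinstein-Heath} and \cite{Heath} and stated without proof, so there is no in-paper argument to compare against. Your proof is correct and is the standard one: extract a countable cofinal chain $F_n$ with $r(F_n)\uparrow R$, use the containment criterion $B(c_1,\rho_1)\subseteq B(c_2,\rho_2)\iff |c_1-c_2|\le\rho_2-\rho_1$ to show the centres form a Cauchy sequence converging to some $c$ with $|c_n-c|\le R-\rho_n$, verify $\bigcup_n F_n=B(c,R)$ by two inclusions, and then check that every $E\in\mathcal{F}$ is contained in $B(c,R)$. The only place to be careful is the case where $F_n\subseteq E$ for every $n$: there you correctly deduce $r(E)=R$ and $B(c,R)\subseteq E$, and the containment criterion with equal radii forces the centres to coincide, so $E=B(c,R)$; this step is worth stating explicitly rather than leaving it as ``equal radii, whence $E=B(c,R)$''. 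The final net-convergence statement is immediate from monotonicity of $r$ under inclusion, as you say.
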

\begin{lemma}
Let $\mathcal{F}$ be a non-empty, nested collection of closed discs in $\mathbb{C}$, such that $\inf\{r(E):E\in\mathcal{F}\}>0$. Then $\bigcap\mathcal{F}$ is a closed disc $\Delta$. Further, for $\mathcal{F}$ ordered by reverse inclusion, $r(\Delta)=\lim_{E\in\mathcal{F}}r(E)=\inf_{E\in\mathcal{F}}r(E)$.
\label{lem:2.4.12}
\end{lemma}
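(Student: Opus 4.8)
The plan is to reduce the general nested family to a single countable descending chain, extract a limiting centre by completeness of $\mathbb{C}$, identify the intersection of the chain as an honest closed disc, and finally promote this to the full family by a cofinality argument. I begin by recording the elementary geometric fact that for closed discs an inclusion $\bar{B}(a,s)\subseteq\bar{B}(b,t)$ forces $|a-b|+s\le t$; in particular radii are monotone under inclusion and the centres cannot drift. Write $r_{0}:=\inf\{r(E):E\in\mathcal{F}\}>0$. Since $\mathcal{F}$ is nested, its radii form a chain bounded below by $r_{0}$, so using the total order I can select members $E_{1}\supseteq E_{2}\supseteq\cdots$ with $r(E_{n})\downarrow r_{0}$. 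Writing $E_{n}=\bar{B}(c_{n},r_{n})$, the inclusion $E_{n+1}\subseteq E_{n}$ gives $|c_{n+1}-c_{n}|\le r_{n}-r_{n+1}$, and telescoping yields $|c_{m}-c_{n}|\le r_{n}-r_{m}\le r_{n}-r_{0}$ for $m>n$. Hence $(c_{n})$ is Cauchy, converging to some $c\in\mathbb{C}$ with $|c-c_{n}|\le r_{n}-r_{0}$.

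Next I identify the countable intersection with a disc. If $|z-c|\le r_{0}$ then $|z-c_{n}|\le r_{0}+(r_{n}-r_{0})=r_{n}$, so $z\in E_{n}$ for every $n$; conversely, if $z\in\bigcap_{n}E_{n}$ then $|z-c_{n}|\le r_{n}$ for all $n$, and letting $n\to\infty$ gives $|z-c|\le r_{0}$. Therefore $\bigcap_{n}E_{n}=\bar{B}(c,r_{0})$, which is a genuine closed disc precisely because $r_{0}>0$.

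The main work is to show this countable intersection already equals $\bigcap\mathcal{F}$. The inclusion $\bigcap\mathcal{F}\subseteq\bigcap_{n}E_{n}$ is automatic, so I must prove $\bar{B}(c,r_{0})\subseteq E$ for every $E\in\mathcal{F}$. Fix such an $E$; nestedness makes it comparable to each $E_{n}$. If $E\supseteq E_{n}$ for some $n$, then $\bar{B}(c,r_{0})=\bigcap_{m}E_{m}\subseteq E_{n}\subseteq E$. Otherwise $E\subseteq E_{n}$ for all $n$, so monotonicity of radii forces $r(E)\le r_{n}$ for every $n$, hence $r(E)=r_{0}$, and the centre estimate pins the centre of $E$ to $c$, giving $E=\bar{B}(c,r_{0})$. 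In either case $\bar{B}(c,r_{0})\subseteq E$, so $\bigcap\mathcal{F}=\bar{B}(c,r_{0})=:\Delta$ with $r(\Delta)=r_{0}=\inf_{E\in\mathcal{F}}r(E)$.

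For the remaining limit clause I observe that $(\mathcal{F},\supseteq)$ is directed, since any two members are comparable, and that $E\mapsto r(E)$ is a monotone net decreasing to its infimum along reverse inclusion; a bounded monotone net converges to its infimum, so $\lim_{E\in\mathcal{F}}r(E)=\inf_{E\in\mathcal{F}}r(E)=r(\Delta)$. I expect the subtle point to be the cofinality step in the third paragraph, namely ruling out a member of $\mathcal{F}$ that lies strictly inside the whole chain $(E_{n})$ without coinciding with the limiting disc; the Cauchy estimate on the centres and the hypothesis $r_{0}>0$ together make this case collapse, the positivity being exactly what guarantees the limiting object is a disc rather than a single point.
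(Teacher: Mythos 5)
Your proof is correct, and it follows what is essentially the expected route: the paper itself states this lemma without proof, importing it from \cite{Feinstein-Heath} and \cite{Heath}, and your argument --- monotonicity of radii under inclusion, extraction of a decreasing sequence with $r(E_{n})\downarrow r_{0}$, Cauchy centres via completeness of $\mathbb{C}$, identification of $\bigcap_{n}E_{n}$ with $\bar{B}(c,r_{0})$, and the comparability dichotomy promoting this to all of $\mathcal{F}$ --- is the standard one. The two points that could hide a gap, namely that nestedness really does allow the selection of a decreasing sequence realising the infimum, and that a member $E\subseteq E_{n}$ for all $n$ must coincide with the limiting disc, are both handled correctly (the latter by the radius squeeze $r_{0}\le r(E)\le r_{n}$ together with the centre estimate $|a-c_{n}|\le r_{n}-r_{0}$).
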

\begin{proof}[Proof of Theorem \ref{the:HT}]
At the heart of the proof of Theorem \ref{the:HT} is a completely defined map $f:H\rightarrow H$ which we now define case by case.
\begin{definition}
Let $f:H\rightarrow H$ be the self map with the following construction.\\
{\bf{Case 1:}} If $h\in H$ is a classical disc assignment function then define $f(h):=h$.\\
{\bf{Case 2:}} If $h\in H$ is not classical then for $h:S\rightarrow\mathcal{O}$ let
\begin{equation*}
I_{h}:=\{(i,j)\in S^{2}:\bar{h}(i)\cap\bar{h}(j)\not=\emptyset, i\not= j\}.
\end{equation*}
We then have lexicographic ordering on $I_{h}$ given by 
\begin{equation*}
(i,j)\lesssim(i^{'},j^{'})\mbox{ if and only if $i<i^{'}$ or ($i=i^{'}$ and $j\le j^{'}$).}
\end{equation*}
Since this is a well-ordering on $I_{h}$, let $(n,m)$ be the minimum element of $I_{h}$ and hence note that $m\not= 0$ since $m>n$. We proceed toward defining $f(h):S^{'}\rightarrow\mathcal{O}$. 
\begin{equation*}
\mbox{Define $S^{'}:=S\backslash\{m\}$ and for $i\in S^{'}\backslash\{n\}$ we define $f(h)(i):=h(i)$.}
\end{equation*}
It remains for the definition of $f(h)(n)$ to be given and to this end we have the following two cases.\\ 
{\bf{Case 2.1:}} $n\not=0$. In this case, by Definition \ref{def:SC}, we note that both $h(m)$ and $h(n)$ are open discs. Associating $h(m)$ and $h(n)$ with $D_{1}$ and $D_{2}$ of Lemma \ref{lem:2.4.13} we define $f(h)(n)$ to be the open disc satisfying the properties of $D$ of the lemma. Note in particular that,
\begin{equation}
h(m)\cup h(n)\subseteq f(h)(n)\mbox{ with }n<m.
\label{equ:sset1}
\end{equation}
{\bf{Case 2.2:}} $n=0$. In this case, by Definition \ref{def:SC}, we note that $h(m)$ is an open disc and $h(0)$ is the complement of a closed disc. Associate $h(m)$ with $D$ from Lemma \ref{lem:2.4.14} and put $\Delta :=\mathbb{C}\backslash h(0)$. Since $(0,m)\in I_{h}$ we have $\bar{h}(0)\cap\bar{h}(m)\not=\emptyset$ and so $\bar{h}(m)\not\subseteq\mbox{\textup{int}}\Delta$, noting $\mbox{\textup{int}}\Delta=\mathbb{C}\backslash\bar{h}(0)$. Further, since $h\in H$ we have $r(h(m))<r(\Delta)$ and so $\Delta\not\subseteq\bar{h}(m)$. Therefore the conditions of Lemma \ref{lem:2.4.14} are satisfied for $h(m)$ and $\Delta$. Hence we define $f(h)(0)$ to be the complement of the closed disc satisfying the properties of $\Delta^{'}$ of Lemma \ref{lem:2.4.14}. Note in particular that,
\begin{equation}
h(m)\cup h(0)\subseteq f(h)(0)\mbox{ with }0<m.
\label{equ:sset2}
\end{equation}
\label{def:fHtoH}
\end{definition}
For this definition of the map $f$ we have yet to show that $f$ maps into $H$. We now show this together with certain other useful properties of $f$.
\begin{lemma}Let $h\in H$, then the following hold:
\begin{enumerate}
\item[\textup{(i)}]
$f(h)\in H$ with $\delta_{f(h)}\geq\delta_{h}$;
\item[\textup{(ii)}]
For $h:S\rightarrow\mathcal{O}$ $\mapsto$ $f(h):S^{'}\rightarrow\mathcal{O}$ we have $S^{'}\subseteq S$ with equality if and only if $h$ is classical. Otherwise $S^{'}=S\backslash\{m\}$ for some $m\in S\backslash\{0\}$;
\item[\textup{(iii)}]
$X_{f(h)}\subseteq X_{h}$;
\item[\textup{(iv)}]
For all $i\in S^{'}, h(i)\subseteq f(h)(i)$.
\end{enumerate}
\label{lem:f}
\end{lemma}
\begin{proof}[Proof of Lemma \ref{lem:f}]
We need only check (i) and (iii) for cases 2.1 and 2.2 of the definition of $f$, as everything else is immediate. Let $h\in H$.\\
(i) It is clear that $f(h)$ is a disc assignment function. It remains to check that $\delta_{f(h)}\geq\delta_{h}$.\\ 
For Case 2.1 we have, by Lemma \ref{lem:2.4.13},
\begin{align*}
\delta_{h}&=r(\mathbb{C}\backslash h(0))-(r(h(m))+r(h(n)))-\sum_{i\in S\backslash\{0,m,n\}}r(h(i))&\\
&\le r(\mathbb{C}\backslash h(0))-r(f(h)(n))-\sum_{i\in S\backslash\{0,m,n\}}r(h(i))=\delta_{f(h)}.&
\end{align*}
For Case 2.2 we have, by Lemma \ref{lem:2.4.14},
\begin{align*}
\delta_{h}&=r(\mathbb{C}\backslash h(0))-r(h(m))-\sum_{i\in S\backslash\{0,m\}}r(h(i))&\\
&\le r(\mathbb{C}\backslash f(h)(0))-\sum_{i\in S\backslash\{0,m\}}r(h(i))=\delta_{f(h)}.&
\end{align*}
(iii) Since $X_{h}=\mathbb{C}\backslash\bigcup_{i\in S}h(i)$ we require $\bigcup_{i\in S}h(i)\subseteq\bigcup_{i\in S^{'}}f(h)(i)$.\\
For Case 2.1 we have by Lemma \ref{lem:2.4.13} that $h(m)\cup h(n)\subseteq f(h)(n)$, as shown at (\ref{equ:sset1}), giving $\bigcup_{i\in S}h(i)\subseteq\bigcup_{i\in S^{'}}f(h)(i)$.\\ For Case 2.2 put $\Delta:=\mathbb{C}\backslash h(0)$ and $\Delta^{'}:=\mathbb{C}\backslash f(h)(0)$. We have by Lemma \ref{lem:2.4.14} that $\Delta^{'}\subseteq\Delta$ and $h(m)\cap\Delta^{'}=\emptyset$. Hence $h(0)\cup h(m)\subseteq f(h)(0)$, as shown at (\ref{equ:sset2}), and so $\bigcup_{i\in S}h(i)\subseteq\bigcup_{i\in S^{'}}f(h)(i)$ as required.
\end{proof}
We will use $f:H\rightarrow H$ to construct an ordinal sequence of disc assignment function and then apply a cardinality argument to show that this ordinal sequence must stabilise at a classical disc assignment function. We construct the ordinal sequence so that it has the right properties.
\begin{definition}Let $h\in H$.
\begin{enumerate}
\item[\textup{(a)}]
Define $h^{0}:S_{0}\rightarrow\bf{D}$ by $h^{0}:=h$.
\end{enumerate}
Now let $\alpha>0$ be an ordinal for which we have defined $h^{\beta}\in H$ for all $\beta<\alpha$.
\begin{enumerate}
\item[\textup{(b)}]
If $\alpha$ is a successor ordinal then define $h^{\alpha}:S_{\alpha}\rightarrow\mathcal{O}$ by $h^{\alpha}:=f(h^{\alpha-1})$.
\item[\textup{(c)}]
If $\alpha$ is a limit ordinal then define $h^{\alpha}:S_{\alpha}\rightarrow\mathcal{O}$ as follows. 
\begin{equation*}
\mbox{Set } S_{\alpha}:=\bigcap_{\beta<\alpha}S_{\beta}. \mbox{ Then for } n\in S_{\alpha} \mbox{ define } h^{\alpha}(n):=\bigcup_{\beta<\alpha}h^{\beta}(n).
\end{equation*}
\end{enumerate}
\label{def:h ord}
\end{definition}
Suppose that for every ordinal $\alpha$ for which Definition \ref{def:h ord} can be applied we have $h^{\alpha}\in H$. Then Definition \ref{def:h ord} can be applied for every ordinal $\alpha$ by transfinite induction and therefore defines an ordinal sequence of disc assignment function. We will use transfinite induction to prove Lemma \ref{lem:h ord} below which asserts that $h^{\alpha}$ is an element of $H$ as well as other useful properties of $h^{\alpha}$.
\begin{lemma}
Let $\alpha$ be an ordinal number and let $h\in H$. Then the following hold:
\begin{enumerate}
\item[\textup{($\alpha$,1)}]
$h^{\alpha}\in H$ with $\delta_{h^{\alpha}}\geq\delta_{h}$;
\begin{enumerate}
\item[\textup{($\alpha$,1.1)}]
$0\in S_{\alpha}$;
\item[\textup{($\alpha$,1.2)}]
$h^{\alpha}(0)$ is the complement of a closed disc and\\ 
$h^{\alpha}(n)$ is an open disc for all $n\in S_{\alpha}\backslash\{0\}$;
\item[\textup{($\alpha$,1.3)}]
$\sum_{n\in S_{\alpha}\backslash \{0\}}r(h^{\alpha}(n))\le r(\mathbb{C}\backslash h^{\alpha}(0))-\delta_{h}$;
\end{enumerate}
\item[\textup{($\alpha$,2)}]
For all $\beta\le\alpha$ we have $S_{\alpha}\subseteq S_{\beta}$; 
\item[\textup{($\alpha$,3)}]
For all $\beta\le\alpha$ we have $X_{h^{\alpha}}\subseteq X_{h^{\beta}}$;
\item[\textup{($\alpha$,4)}]
For all $n\in S_{\alpha}$, $\{h^{\beta}(n):\beta\le\alpha\}$ is a nested increasing family of open sets.
\end{enumerate}
\label{lem:h ord}
\end{lemma}
\begin{proof}[Proof of Lemma \ref{lem:h ord}] We will use transfinite induction.\\
For $\alpha$ an ordinal number let $P(\alpha)$ be the proposition, Lemma \ref{lem:h ord} holds at $\alpha$.\\
The base case $P(0)$ is immediate and our inductive hypothesis is that for all $\beta<\alpha$, $P(\beta)$ holds.\\
Now for $\alpha$ a successor ordinal we have $h^{\alpha}=f(h^{\alpha-1})$ and so $P(\alpha)$ is immediate by the inductive hypothesis and Lemma \ref{lem:f}. Now suppose $\alpha$ is a limit ordinal. 
We have $S_{\alpha}:=\bigcap_{\beta<\alpha}S_{\beta}$ giving, for all $\beta\le\alpha$, $S_{\alpha}\subseteq S_{\beta}$. Hence ($\alpha$,2) holds. Also for all $\beta<\alpha$ we have $0\in S_{\beta}$ by ($\beta$,1.1). So $0\in S_{\alpha}$ showing that ($\alpha$,1.1) holds. To show ($\alpha$,1.2) we will use lemmas \ref{lem:2.4.11} and \ref{lem:2.4.12}.
\begin{enumerate}
\item[\textup{(i)}]
Now for all $n\in S_{\alpha}\backslash\{0\}$, $\{h^{\beta}(n):\beta<\alpha\}$ is a nested increasing family of open discs by ($\beta$,1.2) and ($\beta$,4).
\item[\textup{(ii)}]
Further, $\{\mathbb{C}\backslash h^{\beta}(0):\beta<\alpha\}$ is a nested decreasing family of closed discs by ($\beta$,1.2) and ($\beta$,4).
\item[\textup{(iii)}]
Now for $n\in S_{\alpha}\backslash\{0\}$ and $\beta<\alpha$ we have\\ $r(h^{\beta}(n))\le\sum_{m\in S_{\beta}\backslash\{0\}}r(h^{\beta}(m))=r(\mathbb{C}\backslash h^{\beta}(0))-\delta_{h^{\beta}}\le r(\mathbb{C}\backslash h(0))-\delta_{h}$, by ($\beta$,1) and (ii). Hence $\sup\{r(h^{\beta}(n)):\beta<\alpha\}\le r(\mathbb{C}\backslash h(0))-\delta_{h}$.
So by (i) and Lemma \ref{lem:2.4.11} we have for $n\in S_{\alpha}\backslash\{0\}$ that 
\begin{equation*}
h^{\alpha}(n):=\bigcup_{\beta<\alpha}h^{\beta}(n) 
\end{equation*}
is an open disc with, 
\begin{equation*}
r(h^{\alpha}(n))=\sup_{\beta<\alpha}r(h^{\beta}(n))\le r(\mathbb{C}\backslash h(0))-\delta_{h}.
\end{equation*}
\item[\textup{(iv)}]
Now for $\beta<\alpha$ we have $r(\mathbb{C}\backslash h^{\beta}(0))\ge\delta_{h}$ by ($\beta$,1.3).\\
Hence $\inf\{r(\mathbb{C}\backslash h^{\beta}(0)):\beta<\alpha\}\ge\delta_{h}$. So by De Morgan, (ii) and Lemma \ref{lem:2.4.12} we have 
\begin{equation*}
\mathbb{C}\backslash h^{\alpha}(0):=\mathbb{C}\backslash\bigcup_{\beta<\alpha}h^{\beta}(0)=\bigcap_{\beta<\alpha}\mathbb{C}\backslash h^{\beta}(0) 
\end{equation*}
is a closed disc with, 
\begin{equation*}
r(\mathbb{C}\backslash h^{\alpha}(0))=\inf_{\beta<\alpha}r(\mathbb{C}\backslash h^{\beta}(0))\ge\delta_{h}. 
\end{equation*}
Hence $h^{\alpha}(0)$ is the complement of a closed disc and so ($\alpha$,1.2) holds.
\end{enumerate}
We now show that ($\alpha$,4) holds. By ($\beta$,4) we have, for all $n\in S_{\alpha}$, $\{h^{\beta}(n):\beta<\alpha\}$ is a nested increasing family of open sets. We also have $h^{\alpha}(n)=\bigcup_{\beta<\alpha}h^{\beta}(n)$ so, for all $\beta\le\alpha$, $h^{\beta}(n)\subseteq h^{\alpha}(n)$ and $h^{\alpha}(n)$ is an open set since ($\alpha$,1.2) holds. Hence ($\alpha$,4) holds. 
We will now show that ($\alpha$,1.3) holds. We first prove that, for all $\lambda<\alpha$, we have
\begin{equation}
\sum_{m\in S_{\alpha}\backslash\{0\}}r(h^{\alpha}(m))\le r(\mathbb{C}\backslash h^{\lambda}(0))-\delta_{h}.
\label{equ:inequ1}
\end{equation}
Let $\lambda<\alpha$, and suppose, towards a contradiction, that 
\begin{equation}
\sum_{m\in S_{\alpha}\backslash\{0\}}r(h^{\alpha}(m))>r(\mathbb{C}\backslash h^{\lambda}(0))-\delta_{h},
\label{equ:cont}
\end{equation}
noting that the right hand side of (\ref{equ:cont}) is non-negative by ($\lambda$,1.3).\\ 
Set
\begin{equation*}
\varepsilon:=\frac{1}{2}\left(\sum_{m\in S_{\alpha}\backslash\{0\}}r(h^{\alpha}(m))-(r(\mathbb{C}\backslash h^{\lambda}(0))-\delta_{h})\right)>0.
\end{equation*}
Then there exists $n\in S_{\alpha}\backslash\{0\}$ such that for $S_{\alpha}|_{1}^{n}:=\{m\in S_{\alpha}\backslash\{0\}:m\le n\}$ we have 
\begin{equation}
\sum_{m\in S_{\alpha}|_{1}^{n}}r(h^{\alpha}(m))>r(\mathbb{C}\backslash h^{\lambda}(0))-\delta_{h}+\varepsilon>0. 
\label{equ:inequ2}
\end{equation}
Further for each $m\in S_{\alpha}|_{1}^{n}$ we have, by (iii), $r(h^{\alpha}(m))=\sup_{\beta<\alpha}r(h^{\beta}(m))$. Hence for each $m\in S_{\alpha}|_{1}^{n}$ there exists $\beta_{m}<\alpha$ such that $r(h^{\beta_{m}}(m))\ge r(h^{\alpha}(m))-\frac{1}{2k}\varepsilon$, for $k:=|S_{\alpha}|_{1}^{n}|$, $k\not=0$ by (\ref{equ:inequ2}). Let $\lambda^{'}:=\max\{\beta_{m}:m\in S_{\alpha}|_{1}^{n}\}<\alpha$ and note that this is a maximum over a finite set of elements since $S_{\alpha}|_{1}^{n}\subseteq\mathbb{N}$ is finite. Now for any $\gamma$ with $\max\{\lambda,\lambda^{'}\}\le\gamma<\alpha$ we have,
\begin{align*}
\sum_{m\in S_{\gamma}\backslash\{0\}}r(h^{\gamma}(m))&\ge\sum_{m\in S_{\alpha}\backslash\{0\}}r(h^{\gamma}(m))& &(\mbox{since }S_{\alpha}\subseteq S_{\gamma})&\\
&\ge\sum_{m\in S_{\alpha}|_{1}^{n}}r(h^{\gamma}(m))& &&\\
&\ge\sum_{m\in S_{\alpha}|_{1}^{n}}r(h^{\beta_{m}}(m))& &(\mbox{by ($\gamma$,4)})&\\
&\ge\sum_{m\in S_{\alpha}|_{1}^{n}}(r(h^{\alpha}(m))-\frac{\varepsilon}{2k})& &(\mbox{by the above})&\\
&>r(\mathbb{C}\backslash h^{\lambda}(0))-\delta_{h}+\varepsilon-k\frac{\varepsilon}{2k}& &(\mbox{by (\ref{equ:inequ2}) and }k:=|S_{\alpha}|_{1}^{n}|)&\\
&>r(\mathbb{C}\backslash h^{\lambda}(0))-\delta_{h}& &&\\
&\ge r(\mathbb{C}\backslash h^{\gamma}(0))-\delta_{h}& &(\mbox{by (ii)}).&
\end{align*}
This contradicts ($\gamma$,1.3). Hence we have shown that, for all $\lambda<\alpha$, (\ref{equ:inequ1}) holds.\\ Now by (iv) we have $r(\mathbb{C}\backslash h^{\alpha}(0))=\inf_{\lambda<\alpha}r(\mathbb{C}\backslash h^{\lambda}(0))$.\\ Hence we have $\sum_{m\in S_{\alpha}\backslash\{0\}}r(h^{\alpha}(m))\le r(\mathbb{C}\backslash h^{\alpha}(0))-\delta_{h}$ and so ($\alpha$,1.3) holds.

We now show that ($\alpha$,3) holds. We will show that for all ordinals $\beta<\alpha$,\\ $\bigcup_{i\in S_{\beta}}h^{\beta}(i)\subseteq\bigcup_{i\in S_{\alpha}}h^{\alpha}(i)$. Let $\beta<\alpha$ and $z\in\bigcup_{i\in S_{\beta}}h^{\beta}(i)$. Define,
\begin{equation*}
m:=\min\{i\in\mathbb{N}_{0}:\mbox{ there exists }\lambda<\alpha\mbox{ with } i\in S_{\lambda}\mbox{ and }z\in h^{\lambda}(i)\}.
\end{equation*}
By the definition of $m$ there exists $\zeta<\alpha$ with $m\in S_{\zeta}$ and $z\in h^{\zeta}(m)$. Now the set $\{\lambda<\alpha:m\not\in S_{\lambda}\}$ is empty since suppose towards a contradiction that we can define,
\begin{equation*}
\lambda^{'}:=\min\{\lambda<\alpha:m\not\in S_{\lambda}\}.
\end{equation*}
Then $\lambda^{'}>0$ since, by ($\zeta$,2), $S_{\zeta}\subseteq S_{0}$ with $m\in S_{\zeta}$. If $\lambda^{'}$ is a limit ordinal then $m\not\in S_{\lambda^{'}}=\bigcap_{\gamma<\lambda^{'}}S_{\gamma}$ giving $m\not\in S_{\gamma}$, for some $\gamma<\lambda^{'}$, and this contradicts the definition of $\lambda^{'}$. If $\lambda^{'}$ is a successor ordinal then $h^{\lambda^{'}}=f(h^{\lambda^{'}-1})$ with $m\in S_{\lambda^{'}-1}$ by the definition of $\lambda^{'}$. By $m\not\in S_{\lambda^{'}}$ and Definition \ref{def:fHtoH} of $f:H\rightarrow H$, $h^{\lambda^{'}-1}$ is not classical. Therefore by (\ref{equ:sset1}) and (\ref{equ:sset2}) of Definition \ref{def:fHtoH} there is $n\in S_{\lambda^{'}}$ with $n<m$ and $h^{\lambda^{'}-1}(m)\subseteq h^{\lambda^{'}}(n)$. Further for all $\lambda$ with $\lambda^{'}\le\lambda<\alpha$ we have $m\not\in S_{\lambda}$ since $m\not\in S_{\lambda^{'}}$ and, by ($\lambda$,2), $S_{\lambda}\subseteq S_{\lambda^{'}}$. Hence we have $\zeta<\lambda^{'}$. Now, by ($\lambda^{'}-1$, 4), $\{h^{\gamma}(m):\gamma\le\lambda^{'}-1\}$ is a nested increasing family of sets giving $z\in h^{\zeta}(m)\subseteq h^{\lambda^{'}-1}(m)\subseteq h^{\lambda^{'}}(n)$ with $n\in S_{\lambda^{'}}$. This contradicts the definition of $m$ since $n<m$. Hence we have shown that $\{\lambda<\alpha:m\not\in S_{\lambda}\}$ is empty giving $m\in S_{\alpha}=\bigcap_{\lambda<\alpha}S_{\lambda}$. Therefore, by Definition \ref{def:h ord} and the definition of $\zeta$, we have $z\in h^{\zeta}(m)\subseteq\bigcup_{\lambda<\alpha}h^{\lambda}(m)=h^{\alpha}(m)\subseteq\bigcup_{i\in S_{\alpha}}h^{\alpha}(i)$ as required. Hence ($\alpha$,3) holds. Therefore we have shown, by the principal of transfinite induction, that $P(\alpha)$ holds and this concludes the proof of Lemma \ref{lem:h ord}.
\end{proof}
Recall that our aim is to prove that for every $h\in H$ there is a classical disc assignment function $h^{'}\in H$ with $X_{h^{'}}\subseteq X_{h}$ and $\delta_{h^{'}}\ge\delta_{h}$. We have the following closing argument using cardinality. By ($\alpha$,2) of Lemma \ref{lem:h ord} we obtain a nested ordinal sequence of domains $(S_{\alpha})$,\\
$\mathbb{N}_{0}\supseteq S\supseteq S_{1}\supseteq S_{2}\supseteq\cdots\supseteq S_{\omega}\supseteq S_{\omega+1}\supseteq\cdots\supseteq\{0\}$.\\
Now setting $S_{\alpha}^{c}:=\mathbb{N}_{0}\backslash S_{\alpha}$ gives a nested ordinal sequence $(S_{\alpha}^{c})$,\\
$\emptyset\subseteq S^{c}\subseteq S_{1}^{c}\subseteq S_{2}^{c}\subseteq\cdots\subseteq S_{\omega}^{c}\subseteq S_{\omega+1}^{c}\subseteq\cdots\subseteq\mathbb{N}$. 
\begin{lemma}
For the disc assignment function $h^{\beta}$ we have,\\
$h^{\beta}$ is classical if and only if $(S_{\alpha})$ has stabilised at $\beta$, i.e. $S_{\beta+1}=S_{\beta}$.
\label{lem:stab}
\end{lemma}
\begin{proof}[Proof of Lemma \ref{lem:stab}]
The proof follows directly from (ii) of Lemma \ref{lem:f}.
\end{proof}
Now let $\omega_{1}$ be the first uncountable ordinal. Suppose towards a contradiction that, for all $\beta<\omega_{1}$, $(S_{\alpha})$ has not stabilised at $\beta$. Then for each $\beta<\omega_{1}$ there exists some $n_{\beta+1}\in\mathbb{N}$ such that $n_{\beta+1}\in S_{\beta+1}^{c}$ but $n_{\beta+1}\not\in S_{\alpha}^{c}$ for all $\alpha\le\beta$. Hence since there are uncountably many $\beta<\omega_{1}$ we have $S_{\omega_{1}}^{c}$ uncountable with $S_{\omega_{1}}^{c}\subseteq\mathbb{N}$, a contradiction. Therefore there exists $\beta<\omega_{1}$ such that $(S_{\alpha})$ has stabilised at $\beta$ and so, by Lemma \ref{lem:stab}, $h^{\beta}$ is classical. Now by ($\beta$,1) of Lemma \ref{lem:h ord} we have $h^{\beta}\in H$ with $\delta_{h^{\beta}}\geq\delta_{h}$ and by ($\beta$,3) we have $X_{h^{\beta}}\subseteq X_{h}$. In particular this completes the proof of Theorem \ref{the:HT} and the Feinstein-Heath Swiss cheese ``Classicalisation'' theorem.
\end{proof}
\section{Recovering a key allocation map of Feinstein and Heath}
\label{sec:Allo}
The proof of Theorem \ref{the:FHT} as presented in Section \ref{sec:Proof} proceeded without reference to a theory of allocation maps. In the original proof of Feinstein and Heath, \cite{Feinstein-Heath}, allocation maps play a central role. In this section we will recover a key allocation map from the original proof using the map $f:H\rightarrow H$ of Definition \ref{def:fHtoH}. Here is the definition of an allocation map as it appears in \cite{Feinstein-Heath}.
\begin{definition}
Let ${\bf{D}}=(\Delta,\mathcal{D})$ be a Swiss cheese. We define 
\begin{equation*}
\widetilde{{\bf{D}}}=\mathcal{D}\cup\{\mathbb{C}\backslash\Delta\}.
\end{equation*}
Now let ${\bf{E}}=(\mathsf{E},\mathcal{E})$ be a second Swiss cheese, and let $f:\widetilde{{\bf{D}}}\rightarrow\widetilde{{\bf{E}}}$. We define $\mathcal{G}(f)=f^{-1}(\mathbb{C}\backslash\mathsf{E})\cap\mathcal{D}$. We say that $f$ is an {\bf{allocation map}} if the following hold:
\begin{enumerate}
\item[\textup{(A1)}] for each $U\in\widetilde{{\bf{D}}}$, $U\subseteq f(U)$;
\item[\textup{(A2)}]
\begin{equation*}
\sum_{D\in\mathcal{G}(f)}r(D)\geq r(\Delta)-r(\mathsf{E});
\end{equation*}
\item[\textup{(A3)}] for each $E\in\mathcal{E}$,
\begin{equation*}
\sum_{D\in f^{-1}(E)}r(D)\geq r(E).
\end{equation*}
\end{enumerate}
\label{def:Allo}
\end{definition}
Let ${\bf{D}}$ be the Swiss cheese of Theorem \ref{the:FHT} and let $\mathcal{S}({\bf{D}})$ be the family of allocation maps defined on $\widetilde{{\bf{D}}}$. In \cite{Feinstein-Heath} a partial order is applied to $\mathcal{S}({\bf{D}})$ and subsequently a maximal element $f_{\mbox{\scriptsize{max}}}$ is obtained using Zorn's lemma. The connection between allocation maps and Swiss cheeses is then exploited. Towards a contradiction the non-existence of the desired classical Swiss cheese ${\bf{D^{'}}}$ of Theorem \ref{the:FHT} is assumed. This assumption implies the existence of an allocation map $f'\in\mathcal{S}({\bf{D}})$ that is higher in the partial order applied to $\mathcal{S}({\bf{D}})$ than $f_{\mbox{\scriptsize{max}}}$, a contradiction. The result follows. It is at the last stage of the original proof where a connection to the new version can be found. In the construction of Feinstein and Heath the allocation map $f'$ factorizes as $f'=g\circ f_{\mbox{\scriptsize{max}}}$ where $g$ is also an allocation map.

Let ${\bf{E}}=(\mathsf{E},\mathcal{E})$ be a non-classical Swiss cheese with $\delta({\bf{E}})>0$. Using the same method of construction that Feinstein and Heath use for $g$, an allocation map $g_{\mbox{\tiny{E}}}$ defined on $\widetilde{{\bf{E}}}$ can be obtained without contradiction. Clearly $\widetilde{{\bf{E}}}\not=f_{\mbox{\scriptsize{max}}}(\widetilde{{\bf{D}}})$. We will obtain $g_{\mbox{\tiny{E}}}$ using the map $f:H\rightarrow H$ of Definition \ref{def:fHtoH}. Let $h\in H$, $h:S\rightarrow\mathcal{O}$, be an injective disc assignment function such that ${\bf{D}}_{h}={\bf{E}}$ and recall from Definition \ref{def:fHtoH} that $f(h):S^{'}\rightarrow\mathcal{O}$ has $S^{'}=S\backslash\{m\}$ where $(n,m)$ is the minimum element of $I_{h}$. Set ${\bf{E'}}:={\bf{D}}_{f(h)}$. By Definitions \ref{def:SC} and \ref{def:Allo} we have
\begin{equation*}
\widetilde{{\bf{E}}}=\widetilde{{\bf{D}}_{h}}=h(S)\mbox{ and }\widetilde{{\bf{E'}}}=\widetilde{{\bf{D}}_{f(h)}}=f(h)(S^{'}).
\end{equation*}
Now define a map $\iota:\widetilde{{\bf{E}}}\rightarrow S^{'}$ by,
\begin{equation*}
\mbox{for } U\in\widetilde{{\bf{E}}}\mbox{, }\quad\iota(U):=\begin{cases} h^{-1}(U) &\mbox{ if }\quad h^{-1}(U)\not=m\\ n &\mbox{ if }\quad h^{-1}(U)=m \end{cases},
\end{equation*}
and note that this is well defined since $h$ is injective. The commutative diagram in Figure \ref{fig:gEandf} show how $g_{\mbox{\tiny{E}}}$ is obtained using $f:H\rightarrow H$.
\begin{figure}
\begin{equation*}
\xymatrix{
\widetilde{{\bf{E}}}\ar[r]^{\mbox{{\small{$g$}}}_{\mbox{\tiny{E}}}}\ar[d]_{\iota}&\widetilde{{\bf{E'}}}\\
S^{'}\ar[ru]_{f(h)}&
}
\end{equation*}
\caption{$g_{\mbox{\tiny{E}}}=f(h)\circ\iota$.}
\label{fig:gEandf}
\end{figure}
The construction of $f$ in Definition \ref{def:fHtoH} was developed from the construction that Feinstein and Heath used for $g$. The method of combineing discs in Lemma \ref{lem:2.4.13} also appears in \cite{Zhang}.
%\nocite{*}
%\bibliographystyle{amsplain}
%\bibliography{masonAMSbibdata}
\providecommand{\bysame}{\leavevmode\hbox to3em{\hrulefill}\thinspace}
\providecommand{\MR}{\relax\ifhmode\unskip\space\fi MR }
% \MRhref is called by the amsart/book/proc definition of \MR.
\providecommand{\MRhref}[2]{%
  \href{http://www.ams.org/mathscinet-getitem?mr=#1}{#2}
}
\providecommand{\href}[2]{#2}

\end{document}